\begin{document}
\mainmatter

\title{Multicriteria Portfolio Selection with Intuitionistic Fuzzy Goals as a Pseudoconvex Vector Optimization}
\titlerunning{Multicriteria Portfolio Selection with Intuitionistic Fuzzy Goals}  
\author{Vuong D. Nguyen\inst{1}, Nguyen Kim Duyen\inst{2}, Nguyen Minh Hai \inst{3} \\\and Bui Khuong Duy*\inst{4}}
\authorrunning{Vuong D. Nguyen et al.} 
%
\institute{Department of Computer Science, University of Houston, Houston TX, USA\\
\email{dnguyen170@uh.edu}
\and 
Faculty of Banking and Finance, Foreign Trade University, Hanoi, Vietnam
\email{822407@ftu.edu.vn}
\and
Hanoi University of Science, Vietnam National University, Hanoi, Vietnam
\email{nguyenminhhai\textunderscore hsgs2022@hus.edu.vn}
\and
School of Applied Mathematics and Informatics, Hanoi University of Science and Technology, Hanoi, Vietnam.\\
*Corresponding author, Email: \email{duy.bk195864@sis.hust.edu.vn}
}

\maketitle
\begin{abstract}
Portfolio selection involves optimizing simultaneously financial goals such as risk, return and Sharpe ratio. This problem holds considerable importance in economics. However, little has been studied related to the nonconvexity of the objectives. This paper proposes a novel generalized approach to solve the challenging Portfolio Selection problem in an intuitionistic fuzzy environment where the objectives are soft pseudoconvex functions, and the constraint set is convex. Specifically, we utilize intuitionistic fuzzy theory and flexible optimization to transform the fuzzy pseudoconvex multicriteria vector into a pseudoconvex programming problem that can be solved by recent gradient descent methods. We demonstrate that our method can be applied broadly without special forms on membership and nonmembership functions as in previous works. Computational experiments on real-world scenarios are reported to show the effectiveness of our method.
\keywords{Fuzzy Portfolio Selection, Multicriteria Pseudoconvex Programming, Flexible Optimization, Sharpe Ratio}
\end{abstract}

\section{Introduction}
Portfolio selection is crucial for managing investment risks and optimizing returns. It involves allocating investment assets to achieve specific investment goals, such as maximizing returns or minimizing risks. This requires decisions on the allocation of the weight of assets for different investments, including stocks, bonds, cash, and other assets. Portfolio optimization tools have received attention and development to automate the asset allocation process.

Markowitz's Portfolio Optimization model, alternatively named the Mean-Variance model, is a highly prevalent portfolio optimization model. Investors want to maximize the expected total return while keeping the portfolio's volatility to a minimum or at a certain threshold, leading to a bi-criteria convex optimization problem. Later researchers widely extended Markowitz's model by taking into account risk aversion index \citep{Thang2021}, value-at-risk \citep{Khanjani2020} or Skewness \citep{Jagdish2021}. In this paper, we further consider the Sharpe ratio because it has the function with necessary properties that suits the main problem proposed as shown in \citep{Vuong2023}. Sharpe ratio is an important factor that measures the performance of the portfolio via the ratio of expected return to standard deviation.

 Traditional portfolio selection models based on crisp (i.e., deterministic) optimization techniques have several limitations, such as the inability to capture uncertainties in the decision-making process of investors. Thus, in this study, we propose to solve a more practical portfolio selection problem in an intuitionistic fuzzy environment with soft goals (IFPS). This intuitionistic fuzzy multicriteria programming problem (IFMOP) has been solved in simple scenarios of linear objectives and convex objectives (see \citep{yu2021intuitionistic, gupta2020intuitionistic, Sakawa2013}). Later works 
approached Problem (IFMOP) by interactive programming that requires frequent interaction with the decision makers and hard constraints on the activation of the fuzzy objective.
 Furthermore, little work has been proposed to solve Problem (IFMOP).
 
In this research, we extend Markowitz's model by considering Sharpe Ratio. Specifically, we first propose the intuitionistic fuzzy multicriteria portfolio selection problem. We then present the construction of our method to solve the equivalent (IFMOP) in a generalized framework based on the nice property of the Sharpe ratio function. We utilize flexible optimization to transform the original fuzzy multicriteria optimization problem to a pseudoconvex programming problem, then we exploit the algorithm in \citep{Thang2022self} to solve the equivalent problem. Unlike previous works, our method not only requires the least interaction with the decision makers but also works effectively without any hard assumption on activating the soft goals. This
makes our method more flexible and robust, enabling to handle imprecise and uncertain data, multiple conflicting objectives, and various constraints.

We organize the remaining of the article in 4 sections as follows. In Section
2, we introduce the main portfolio selection problem. Section 3 presents the preliminaries and our methodology to solve the equivalent problem (IFMOP). Section 4 demonstrates the effectiveness of our method via experiments on real-world portfolio selection problems. Section 5 presents our conclusion.

\section{Multicriteria Portfolio Selection Problem}
Consider a portfolio vector $x = (x_1,\dots, x_n)$ where $x_k$ is the proportion invested in $k^{th}$ asset. In reality, there is a constraint set on $x$, and we denote it as $\mathcal{X} = \{ x \in \mathbb{R}_{+}^{n} \mid x_1 + \dots + x_n = 1\}$.  

Let there be $n$ assets with random returns represented by a random vector $\mathcal{R} = (\mathcal{R}_1, \mathcal{R}_2, \dots, \mathcal{R}_n)^T$, and the expected returns of those $n$ assets are denoted by vector $\mathcal{L} = (\mathcal{L}_1, \mathcal{L}_2, \dots, \mathcal{L}_n)^T$ . Thus, the total random return of $n$ assets is represented by $\mathcal{R}^Tx = \sum_{k=1}^n \mathcal{R}_k x_k $, which is a linear stochastic function. However, investors typically consider the expected returns of $n$ asset classes as follows
\begin{equation}
\mathcal{E}(x)=E(\mathcal{R}^{T}x)=\sum_{k=1}^{n}\mathcal{L}_{k}x_{k}.    
\end{equation}

Let $\mathcal{Q}=(\sigma_{ij})_{n\times n}$ be the covariance matrix of random vector $\mathcal{R}$.
Then, the variance of returns, i.e. risk of the portfolio, can be denoted as
\begin{equation}
    \mathcal{V}(x)=Var(\mathcal{R}^{T}x)=\sum_{i=1}^{n}\sum_{k=1}^{n}\sigma_{ik}x_{i}x_{k}
\end{equation}
where, $\sigma_{ii}^{2}$ represents the variance of $\mathcal{R}_i$, and $\sigma_{ik}$ denotes the correlation
coefficient between $\mathcal{R}_{k}$ and $\mathcal{R}_{i}$, $i,k=1,2,...,n$.

Beyond return and risk, investors demand to understand the return of an investment compared to its risk, which is represented as the Sharpe ratio
\begin{equation}
    \mathcal{S}r(x)=\frac{\mathcal{E}(x)-p_{rf}}{\sqrt{\mathcal{V}(x)}}    
\end{equation}
where $\mathcal{S}r(x)$ denotes the Sharpe ratio, $p_{rf}$ denotes the rate of a zero-risk-portfolio's return. The benchmark in return is then divided by $\sqrt{\mathcal{V}(x)}$ which measures how much the portfolio excesses standard deviation of return.

According to Markowitz's model, the investor wants to optimize two goals
\begin{align}
\begin{array}{ll}\tag{MV}\label{bt_MV}
   \operatorname{Max}  & \mathcal{E}(x)\\
  \operatorname{Min}  &  \mathcal{V}(x)\\
\text{s.t. } & x \in \mathcal{X}.  
\end{array}
\end{align}
where objective $\mathcal{E}(x)$ is a linear function and objective $\mathcal{V}(x)$ is a convex function.

In this research, by rewriting $\mathcal{E}^*(x) = -\mathcal{E}(x)$ and $\mathcal{S}r^*(x) = -\mathcal{S}r(x)$, we propose the tri-criteria vector minimization problem as follows
\begin{align}
\begin{array}{ll}\tag{MVS}\label{bt_MVS}
    \operatorname{Min} & \{\mathcal{E}^*(x), \mathcal{V}(x), \mathcal{S}r^*(x) \} \\
    \text{s.t. } & x \in \mathcal{X}.
\end{array}
\end{align}
where $\mathcal{X}$ is a non-empty convex set. The property of $\mathcal{S}r^*(x)$ which plays a key role in the construction of our method will be presented in Section 3.

\section{Multicriteria Portfolio Selection with Intuitionistic Fuzzy Goals}
\subsection{Intuitionistic fuzzy goals}

To amplify the uncertainty in Fuzzy portfolio selection problem, intuitionistic fuzzy goals allow decision makers to express ambiguity in their goals. Given an universal set $\mathbb X$, the generalization of fuzzy sets that allow for more nuanced and flexible representation of uncertainty is called an intuitionistic fuzzy set of $\mathbb X$, denoted by $\Tilde{\mathcal{A}}$. Consider the following two mappings:
the membership mapping $\mu_{\Tilde{\mathcal{A}}}: \mathbb X \rightarrow [0, 1]$
and the non-membership mapping $ \nu_{\Tilde{\mathcal{A}}}: \mathbb X\rightarrow [0,1]$, we present the definition of the intuitionistic fuzzy set $\Tilde{\mathcal{A}}$ as follows  

\begin{definition}\citep{atanassov1983intuitionistic}\label{def:IFS} Conditioning $0 \leq \mu_{\Tilde{\mathcal{A}}}(x) + \nu_{\Tilde{\mathcal{A}}}(x) \leq 1$ for all $x \in\mathbb X$, then

\begin{equation} \Tilde{\mathcal{A}} = \{ \left<x, \mu_{\Tilde{\mathcal{A}}}(x), \nu_{\Tilde{\mathcal{A}}}(x) \right> \mid x\in\mathbb X \},\end{equation}

\end{definition}
The numbers $\mu_{\Tilde{\mathcal{A}}}(x)$ and $\nu_{\Tilde{\mathcal{A}}}(x)$ represent the membership and non-membership of $x$ in $\Tilde{\mathcal{A}}$, respectively.

Now, the general multi-objective vector optimization problem with the following formula is now being considered
\begin{align}
\begin{array}{ll}\tag{MOP}\label{bt_GMOP}
    \operatorname{Min} & \mathcal{F}(x)=(\mathcal{F}_{1}(x),...,\mathcal{F}_{k}(x))^T \\
    \text{s.t.} & x\in \mathcal{X}\nonumber
\end{array}
\end{align}
where $\mathcal{X}$ is a nonempty compact convex set. General fuzzy optimization refers to the formulation of optimization problems using fuzzy sets, where the constraints and objectives are flexible, approximate, or uncertain. 
We consider the main problem with fuzzy form as
\begin{align}
\begin{array}{ll}\tag{IFMOP}\label{bt_FGMOP}
  \operatorname{\widetilde{Min}} & \mathcal{F}(x)=(\mathcal{F}_{1}(x),...,\mathcal{F}_{k}(x))^T \\
\text{s.t. } & x\in \mathcal{X}\nonumber 
\end{array} 
\end{align}
where $\widetilde{\operatorname{Min}}$ represents ``to minimize as well as possible based on the demand of the decision makers''. The approach to problem (\ref{bt_GMOP}) with fuzzy objectives
is widely applied by decision-makers in many real-world  problems.

In this study, we propose models based on an intuitionistic fuzzy set.  Therefore, we use the membership and non-membership functions to associate the input data. These functions are vital in intuitionistic fuzzy optimization. Consider a monotonic decreasing function $m_i(\cdot)$,
the membership function $\mu_i$ with respect to $\mathcal{F}_i$ has the form as following
\begin{align}
\mu_{i}\left(\mathcal{F}_{i}(x)\right)=\begin{cases}
0 & \text{if }\mathcal{F}_{i}(x)\geq y_{i}^{0},\\
m_{i}(\mathcal{F}_i(x)) & \text{if }y_{i}^{0}\geq \mathcal{F}_{i}(x)\geq y_{i}^{1},\\
1 & \text{if }\mathcal{F}_{i}(x)\leq y_{i}^{1},
\end{cases}    
\end{align}
where $y_{i}^{0}$ is the minimum value of $\mathcal{F}_{i}$
if $\mu_{i}\left(\mathcal{F}_{i}(x)\right)=0$ and $y_{i}^{1}$ is the maximum value of $f_{i}$
if $\mu_{i}\left(\mathcal{F}_{i}(x)\right)=1$. 
On the other hand, consider a monotonic increasing $n_i(\cdot)$, the non-membership function $\nu_i$ with respect to $\mathcal{F}_i$ has the form 
\begin{align}
    \nu_{i}\left(\mathcal{F}_{i}(x)\right)=\begin{cases}
1 & \text{if }\mathcal{F}_{i}(x)\geq y_{i}^{0},\\
n_{i}(\mathcal{F}_i(x)) & \text{if }y_{i}^{0}\geq \mathcal{F}_{i}(x)\geq y_{i}^{1},\\
0 & \text{if }\mathcal{F}_{i}(x)\leq y_{i}^{1},
\end{cases} 
\end{align}
where $y_{i}^{0}$ is the minimum value of $\mathcal{F}_{i}$
if $\nu_{i}\left(\mathcal{F}_{i}(x)\right)=1$ and $y_{i}^{1}$ is the maximum value of $\mathcal{F}_{i}$ if $\nu_{i}\left(\mathcal{F}_{i}(x)\right)=0$. 

The mappings $\mu_{i}\left(\mathcal{F}_{i}(x)\right)$ and $\nu_i(\mathcal{F}_i(x))$ are also called intuitionistic fuzzy mappings which map $\mathcal{F}_i(x)$ to an intuitionistic fuzzy number belonging to interval $[0,1]$ and satisfy $0\leq \mu_{i}\left(\mathcal{F}_{i}(x)\right) + \nu_i(\mathcal{F}_i(x)) \leq 1$ according to definition \ref{def:IFS}. Several conditions on characteristics of a general fuzzy mapping have been proposed in \citep{Gomez2015}. However, we propose the following proposition about the properties of intuitionistic fuzzy mappings used in the problem model in the following section.
\begin{proposition}
    \label{prop:mu}
$\mu_{i}$, $i = 1,\ldots,k$ is monotonic decreasing
function, and $\nu_i$, $i=1,\ldots, k$ is monotonic increasing function.
\end{proposition}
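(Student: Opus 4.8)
The plan is to prove the claim directly from the piecewise definitions, treating each of $\mu_i$ and $\nu_i$ as a function of its scalar argument $t := \mathcal{F}_i(x)$ ranging over $\mathbb{R}$ (which is the only sense in which ``monotonic'' is meaningful here, since $\mathbb{R}^n$ carries no order). First I would record the consistency requirements that the thresholds $y_i^1 \leq y_i^0$ force on the breakpoints: for the three branches of $\mu_i$ to agree at $t = y_i^0$ and $t = y_i^1$ we must have $m_i(y_i^0) = 0$ and $m_i(y_i^1) = 1$, and likewise $n_i(y_i^0) = 1$ and $n_i(y_i^1) = 0$ for $\nu_i$. These normalizations are exactly what the phrases ``$y_i^0$ is the minimum value of $\mathcal{F}_i$ with $\mu_i = 0$'' and ``$y_i^1$ is the maximum value with $\mu_i = 1$'' encode, and I would state them explicitly at the outset so the case analysis has no gaps.

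Then, given arbitrary $s < t$, I would verify $\mu_i(s) \geq \mu_i(t)$ by a short case split on which of the intervals $(-\infty, y_i^1]$, $[y_i^1, y_i^0]$, $[y_i^0, \infty)$ contain $s$ and $t$. On the two constant branches the inequality is trivial; when both lie in the middle branch it is precisely the assumed monotonic decrease of $m_i$; and in each mixed case it reduces to sandwiching the middle-branch value between the endpoint values $m_i(y_i^0) = 0$ and $m_i(y_i^1) = 1$ (again using that $m_i$ is decreasing, so $0 \leq m_i(t) \leq 1$ on $[y_i^1,y_i^0]$). The argument for $\nu_i$ is symmetric: $\nu_i$ equals $1$ on $[y_i^0,\infty)$, $n_i$ on $[y_i^1,y_i^0]$, and $0$ on $(-\infty,y_i^1]$, so the same case split with the inequalities reversed and with $0 = n_i(y_i^1) \leq n_i(t) \leq n_i(y_i^0) = 1$ on the middle branch gives $\nu_i(s) \leq \nu_i(t)$ whenever $s < t$. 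As a byproduct one also gets $0 \leq \mu_i(t) + \nu_i(t) \leq 1$ consistently with Definition~\ref{def:IFS}: the sum is $0+1$ or $1+0$ on the constant branches and $m_i(t) + n_i(t)$ on the middle branch, which the modeler chooses to satisfy the Atanassov condition (for instance $n_i = 1 - m_i$, or any pair with $m_i + n_i \leq 1$).

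The proof is essentially bookkeeping, so the only real subtlety — what I would flag as the ``hard part'' — is conceptual rather than computational: monotonicity of $\mu_i$ and $\nu_i$ on all of $\mathbb{R}$ does \emph{not} follow from $m_i$ decreasing and $n_i$ increasing in isolation; it genuinely needs the boundary normalizations $m_i(y_i^1)=1$, $m_i(y_i^0)=0$, $n_i(y_i^1)=0$, $n_i(y_i^0)=1$ to make the middle branch attach correctly to the flat pieces. Making those conditions explicit is the step that turns the statement into a one-line case analysis, and it is the place where a careless write-up would leave a gap.
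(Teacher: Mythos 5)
Your proof is correct. The paper in fact offers no proof of Proposition~\ref{prop:mu} at all: it is stated immediately after the piecewise definitions of $\mu_i$ and $\nu_i$ and treated as evident from the construction ($m_i$ decreasing on the middle branch, constants $0$ and $1$ on the outer branches, and symmetrically for $n_i$). Your case analysis is the natural way to make that implicit argument rigorous, and your flagged subtlety is exactly the right one — the boundary normalizations $m_i(y_i^0)=0$, $m_i(y_i^1)=1$, $n_i(y_i^0)=1$, $n_i(y_i^1)=0$ are what glue the middle branch to the flat pieces, and the paper only encodes them obliquely through the verbal descriptions of $y_i^0$ and $y_i^1$ as the ``minimum value with $\mu_i=0$'' and ``maximum value with $\mu_i=1$''. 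Your write-up supplies a genuinely missing argument rather than duplicating one.
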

By utilizing Proposition \ref{prop:mu}, we propose to build intuitionistic fuzzy mappings that require the least interaction with the decision maker and also present our novel method to transform the intuitionistic fuzzy multicriteria decision problem into a deterministic problem that can be solved effectively using pseudoconvex programming algorithms. The same scheme can be applied to the picture fuzzy set (see \citep{van2022applied,van2023applied,long2022novel}) to build the multicriteria  portfolio selection with picture fuzzy goals. This expansion will be developed by us in the future. 

\subsection{Transformation to deterministic model}
Recall problem (\ref{bt_MVS}). As mentioned above, this problem is not multiobjective convex programming. However, we rely on the nice property of $\mathcal{S}r(x)$ to prove its property, as below
\begin{definition}\textbf{(Pseudoconvex function (see \citep{mangasarian1975pseudo})).} Given a non-empty convex set $\mathbb X$ and a differentiable function $f: \mathbb R^n \to \mathbb R$ on $\mathbb X$. We say that $f$ is a pseudoconvex function on $\mathbb X$ if for all $x^1, x^2$ in $\mathbb X$, it holds that:
    \begin{equation}
        f(x^2) < f(x^1) \Rightarrow \langle\nabla f(x^1), x^2-x^1\rangle < 0.
    \end{equation}  
    If $f$ is a pseudoconvex function, then $-f$ is called a pseudoconcave function.
\end{definition}
The following proposition will be verified to us about the pseudoconcavity of the function Sharpe ratio
\begin{proposition} \label{prop2}
    $\mathcal{S}r(x)$ is a pseudoconcave function, and $\mathcal{S}r^*(x)$ is a pseudoconvex function.
\end{proposition}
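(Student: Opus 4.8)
The plan is to argue directly from the definition of pseudoconvexity; once $\mathcal{S}r$ is shown to be pseudoconcave on the region where it is well defined, the claim that $\mathcal{S}r^{*}=-\mathcal{S}r$ is pseudoconvex is immediate from the last sentence of the pseudoconvexity definition. I would work under the standing assumptions that the covariance matrix $\mathcal{Q}$ is (symmetric) positive definite, so that $\mathcal{V}(x)=x^{T}\mathcal{Q}x>0$ for every $x\in\mathcal{X}$ (the constraint $x_{1}+\dots+x_{n}=1$ forces $x\neq 0$), which in particular makes $\mathcal{S}r$ differentiable on $\mathcal{X}$, and that the excess return satisfies $\mathcal{E}(x)-p_{rf}>0$ on $\mathcal{X}$, which is the only regime in which comparing Sharpe ratios is meaningful.

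First I would record the gradient. Abbreviating $E(x)=\mathcal{E}(x)-p_{rf}$ and $h(x)=\sqrt{\mathcal{V}(x)}$, and using $\nabla\mathcal{E}(x)=\mathcal{L}$ together with $\nabla\mathcal{V}(x)=2\mathcal{Q}x$, the quotient rule gives
\[
\nabla\mathcal{S}r(x)=\frac{\mathcal{L}}{h(x)}-\frac{E(x)\,\mathcal{Q}x}{h(x)^{3}}.
\]
Then, fixing $x^{1},x^{2}\in\mathcal{X}$ and writing $E_{i}=E(x^{i})$, $h_{i}=h(x^{i})$, I would use $\langle\mathcal{L},x^{2}-x^{1}\rangle=\mathcal{E}(x^{2})-\mathcal{E}(x^{1})=E_{2}-E_{1}$ (the $p_{rf}$ cancels) and $\langle\mathcal{Q}x^{1},x^{1}\rangle=h_{1}^{2}$ to collapse the directional derivative to
\[
\langle\nabla\mathcal{S}r(x^{1}),x^{2}-x^{1}\rangle=\frac{E_{2}}{h_{1}}-\frac{E_{1}\,(x^{1})^{T}\mathcal{Q}x^{2}}{h_{1}^{3}}.
\]

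The key analytic step is to bound the cross term using the Cauchy--Schwarz inequality for the (semi-)inner product $\langle u,v\rangle_{\mathcal{Q}}=u^{T}\mathcal{Q}v$ induced by the positive semidefinite matrix $\mathcal{Q}$, namely $(x^{1})^{T}\mathcal{Q}x^{2}\le h_{1}h_{2}$. Since $E_{1}>0$ and $h_{1}>0$, this yields
\[
\langle\nabla\mathcal{S}r(x^{1}),x^{2}-x^{1}\rangle\;\ge\;\frac{E_{2}}{h_{1}}-\frac{E_{1}h_{2}}{h_{1}^{2}}\;=\;\frac{h_{2}}{h_{1}}\Bigl(\mathcal{S}r(x^{2})-\mathcal{S}r(x^{1})\Bigr).
\]
Because $h_{2}/h_{1}>0$, the hypothesis $\mathcal{S}r(x^{2})>\mathcal{S}r(x^{1})$ makes the right-hand side strictly positive, hence $\langle\nabla\mathcal{S}r(x^{1}),x^{2}-x^{1}\rangle>0$, which is exactly the pseudoconcavity inequality. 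Therefore $\mathcal{S}r$ is pseudoconcave on $\mathcal{X}$ and $\mathcal{S}r^{*}$ is pseudoconvex.

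I expect the main obstacle to be not the algebra but correctly pinning down the domain hypotheses: one must ensure $\mathcal{V}(x)>0$ on $\mathcal{X}$ (so that $h$ and $\nabla\mathcal{S}r$ exist) and that $\mathcal{E}(x)-p_{rf}$ keeps a constant positive sign, since it is precisely the positivity of $E_{1}$ that orients the Cauchy--Schwarz bound the right way; if the excess return could vanish or change sign the statement would need to be restricted accordingly. An alternative, less self-contained route would be to invoke the classical fractional-programming fact that the ratio of a nonnegative concave function to a positive convex function is pseudoconcave, applied with the affine numerator $\mathcal{E}(x)-p_{rf}$ and the convex positive denominator $\sqrt{\mathcal{V}(x)}=\|\mathcal{Q}^{1/2}x\|$; I would nonetheless keep the direct argument above as the primary proof, since it additionally exhibits the useful identity relating the directional derivative to the gap $\mathcal{S}r(x^{2})-\mathcal{S}r(x^{1})$.
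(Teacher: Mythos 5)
Your proof is correct, but it takes a genuinely different route from the paper's. The paper disposes of the proposition in one line by invoking the classical fractional-programming fact that the ratio of a positive concave (here affine) numerator to a positive convex denominator is pseudoconcave, citing Avriel et al. --- which is precisely the ``alternative, less self-contained route'' you mention in your closing paragraph. Your primary argument instead verifies the definition directly: you compute $\nabla\mathcal{S}r$, collapse the directional derivative to $\frac{E_{2}}{h_{1}}-\frac{E_{1}(x^{1})^{T}\mathcal{Q}x^{2}}{h_{1}^{3}}$, and apply Cauchy--Schwarz in the $\mathcal{Q}$-inner product to obtain the quantitative bound $\langle\nabla\mathcal{S}r(x^{1}),x^{2}-x^{1}\rangle\ge \frac{h_{2}}{h_{1}}\bigl(\mathcal{S}r(x^{2})-\mathcal{S}r(x^{1})\bigr)$, from which pseudoconcavity is immediate; the algebra checks out, and as you note only $E_{1}>0$ is actually needed to orient the Cauchy--Schwarz step, while the rewriting $\frac{E_{2}}{h_{1}}=\frac{h_{2}}{h_{1}}\mathcal{S}r(x^{2})$ requires no sign condition on $E_{2}$. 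What your approach buys is self-containedness plus an explicit identity relating the directional derivative to the Sharpe-ratio gap, and a more honest accounting of the hypotheses: you flag that $\mathcal{V}(x)>0$ and $\mathcal{E}(x)-p_{rf}>0$ on $\mathcal{X}$ must hold, whereas the paper asserts that $\mathcal{E}(x)-p_{rf}$ is ``positive linear'' without justification even though its cited theorem requires exactly that positivity. What the paper's citation buys is brevity and independence from the particular quadratic structure of $\mathcal{V}$, since the fractional-programming lemma applies to any positive concave numerator over any positive convex denominator.
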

\begin{proof}
    Note that $\mathcal{E}(x) - p_{rf}$ is positive linear as $p_{rf}$ is a constant, while $\sqrt{\mathcal{V}(x)}$
is  convex. On the other hand, given two functions $\varphi_{1}$
and $\varphi_{2}$ defined on a set $X$, if $\varphi_{1}$ is a positive and concave functions,
$\varphi_{2}$ is a positive convex function on $X$ satisfied $\varphi_{1}, \varphi_{2}$ are differentiable functions on $X$, then fractional function $\frac{\varphi_{1}}{\varphi_{2}}$
is pseudoconcave function on $X$ (see in \citep{avriel1988generalized}). So, the function $\mathcal{S}r(x)$ is a pseudoconcave function, and $\mathcal{S}r^*(x) = -\mathcal{S}r(x)$ is a pseudoconvex function.
\end{proof}
From Proposition \ref{prop2}, we have shown that the Problem (\ref{bt_MVS}) is a multiobjective pseudoconvex programming problem. We consider this problem in an intuitionistic fuzzy environment. Then the problem has the formulation as
\begin{align}
    \begin{array}{ll}\tag{IFMVS}\label{bt_FMVS}
        \widetilde{\operatorname{Min}} & \{\mathcal{E}^*(x), \mathcal{V}(x), \mathcal{S}r^*(x) \} \\
    \text{s.t.} & x \in \mathcal{X}.
    \end{array}
\end{align}

Associating the input data requires using membership and non-membership functions. Therefore, it is needed to develop a way to define the functions $\mu_i$ and $\nu_i$ according to definition \ref{def:IFS} and proposition \ref{prop:mu}. Prior studies have assumed that decision-makers engage in frequent interaction, i.e. the decision-makers can specify $y_{i}^{1}$ and
$y_{i}^{0}$ within $y_{i}^{\text{min}}$ and $y_{i}^{\text{max}}$ \citep{Sakawa2013}. Furthermore, previously proposed methods are restricted to apply only certain types of fuzzy mappings, such as the popular linear monotonic decreasing and increasing mappings given as
\begin{equation}
\mu_{i}^{L}(\mathcal{F}_i(x))=\frac{y_i^0-\mathcal{F}_i(x)}{y_{i}^{0}-y_{i}^{1}} \hspace{0.8cm}\text{ and }\hspace{0.8cm} \nu_i^L(\mathcal{F}_i(x)) = \frac{\mathcal{F}_i(x) - y^1_i}{y^0_i - y^1_i} \label{linearmf}
\end{equation}
We instead propose a method that can be effectively applied with any arbitrary membership function. More importantly, our framework adapts to a more realistic environment where frequent interaction with the decision-makers is not feasible. Specifically, we propose to find the appropriate range for $y_{i}^{0}$ and $y_{i}^{1}$, then use the range to ease the process of acquiring requirements from the decision-makers. Consider the following problems
\begin{equation}
\operatorname{min}_{x\in \mathcal{X}}\mathcal{F}_{i}(x),\,i = 1,\ldots,k,\tag{\ensuremath{P_{i}^{m}}}\label{bt_Pm}
\end{equation}
\begin{equation}
\operatorname{max}_{x\in \mathcal{X}}\mathcal{F}_{i}(x),\,i = 1,\ldots,k.\tag{\ensuremath{P_{i}^{M}}}\label{bt_PM}
\end{equation}
Denote $y_{i}^{\text{min}}$ as the optimal solution to problem (\ref{bt_Pm}) and $y_{i}^{\text{max}}$ as an upper bound of the problem (\ref{bt_PM}). 
\begin{remark}
\label{rem:1} Consider a pseudoconvex programming
problem, if $\hat{x}$ is locally optimal, then $\hat{x}$ is globally optimal (see \citep{Mangasarian1994}).
\end{remark}
By remark (\ref{rem:1}), $y_{i}^{\text{min}}$ can be easily found. To deal with problem (\ref{bt_PM}) which is a non-convex problem,
instead, find an upper bound $y_{i}^{\text{max}}$ of this problem
(see \citep{Benson1998,thang2020monotonic} for details and illustration). Then, for $\,i = 1,\ldots,k$, $y_{i}^{0}$
and $y_{i}^{1}$ are calculated by
\begin{equation}
y_{i}^{1}=y_{i}^{\text{min}}, y_{i}^{0}=y_{i}^{\text{max}} \label{eq:z1}
\end{equation}
Now, we can rewrite the deterministic form of the problem  (\ref{bt_FMVS}) as
\begin{align}
    \begin{array}{ll}\tag{BFMVS}\label{bt_BFMVS}
        \operatorname{Max} & \{\mu_1(\mathcal{E}^*(x)), \mu_2(\mathcal{V}(x)), \mu_3(\mathcal{S}r^*(x))\} \\
        \operatorname{Min} & \{\nu_1(\mathcal{E}^*(x)), \nu_2(\mathcal{V}(x)), \nu_3(\mathcal{S}r^*(x))\} \\
    \text{s.t. } & x \in \mathcal{X}.
    \end{array}
\end{align}
where $\mu_i, \nu_i$ are the mappings of the intuitionistic fuzzy set corresponding to $\mathcal{E}^*(x), \mathcal{V}(x), \mathcal{S}r^*(x)$. 
Set $\eta_i(\cdot)=1-\mu_i(\cdot)$, then problem (\ref{bt_BFMVS}) can be rewritten as
\begin{align}
\begin{array}{ll}\tag{MFMVS}\label{bt_MFMVS}
    \operatorname{Min} & \{\eta_1(\mathcal{E}^*(x)),\eta_2(\mathcal{V}(x)), \eta_3(\mathcal{S}r^*(x)), \nu_1(\mathcal{E}^*(x)), \nu_2(\mathcal{V}(x)), \nu_3(\mathcal{S}r^*(x))\} \\
    \text{s.t. } & x \in \mathcal{X}.
\end{array}
\end{align}
Instead of directly solving this problem, we propose to convert problem (\ref{bt_MFMVS}) into the following problem
\begin{align}
\begin{array}{ll}\tag{TFMVS}\label{bt_TFMVS}
    \operatorname{Min} & \operatorname{max} \{\eta_1(\mathcal{E}^*(x)),\eta_2(\mathcal{V}(x)), \eta_3(\mathcal{S}r^*(x)), \nu_1(\mathcal{E}^*(x)), \nu_2(\mathcal{V}(x)), \nu_3(\mathcal{S}r^*(x))\} \\
    \text{s.t. } & x \in \mathcal{X}.
\end{array}
\end{align}
This single objective problem has received great attention from many authors (see in \citep{tuoi2022fuzzy, hoang2021stochastic}). However, previous works only deal with objectives without pseudoconvexity. In this paper, we demonstrate the properties of this problem as follows
\begin{proposition}\label{prop:pseudo}
(\ref{bt_TFMVS}) is a pseudoconvex programming problem.
\end{proposition}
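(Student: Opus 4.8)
\medskip

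\noindent\textbf{Proof proposal.} The objective of (\ref{bt_TFMVS}) is the finite pointwise maximum $G(x):=\max\{\eta_1(\mathcal{E}^*(x)),\eta_2(\mathcal{V}(x)),\eta_3(\mathcal{S}r^*(x)),\nu_1(\mathcal{E}^*(x)),\nu_2(\mathcal{V}(x)),\nu_3(\mathcal{S}r^*(x))\}$, and since $\mathcal{X}$ is convex it suffices to prove that $G$ is a (nonsmooth) pseudoconvex function on $\mathcal{X}$. The plan is to reduce this to two ingredients: (i) each of the six inner functions is pseudoconvex on $\mathcal{X}$, and (ii) a finite pointwise maximum of pseudoconvex functions is pseudoconvex. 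For (i), I would first recall that the three ``raw'' objectives are pseudoconvex on $\mathcal{X}$: $\mathcal{E}^*$ is affine hence pseudoconvex (indeed pseudolinear), $\mathcal{V}$ is a positive semidefinite quadratic form hence convex hence pseudoconvex, and $\mathcal{S}r^*$ is pseudoconvex by Proposition~\ref{prop2} (using that $\mathcal{V}(x)>0$ on $\mathcal{X}$ whenever $\mathcal{Q}$ is positive definite, so that $\mathcal{S}r^*$ is differentiable there).

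Next I would handle the composition with the intuitionistic fuzzy mappings. By the construction (\ref{eq:z1}) one has $y_i^1=y_i^{\text{min}}=\min_{x\in\mathcal{X}}\mathcal{F}_i(x)$ and $y_i^0=y_i^{\text{max}}\geq\max_{x\in\mathcal{X}}\mathcal{F}_i(x)$, so every feasible $x$ satisfies $\mathcal{F}_i(x)\in[y_i^1,y_i^0]$; hence on $\mathcal{X}$ the map $\eta_i=1-\mu_i$ coincides with $1-m_i$ and $\nu_i$ coincides with $n_i$, which by Proposition~\ref{prop:mu} are (strictly) monotone increasing and differentiable on that interval. I would then invoke the standard composition rule (see \citep{avriel1988generalized}): if $g:\mathbb{R}\to\mathbb{R}$ is differentiable and strictly increasing and $f$ is pseudoconvex, then $g\circ f$ is pseudoconvex, because $\nabla(g\circ f)(x)=g'(f(x))\nabla f(x)$ with $g'(f(x))>0$, so the sign of $\langle\nabla(g\circ f)(x),x'-x\rangle$ agrees with that of $\langle\nabla f(x),x'-x\rangle$. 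Applying this with $f\in\{\mathcal{E}^*,\mathcal{V},\mathcal{S}r^*\}$ and $g\in\{\eta_i,\nu_i\}$ shows each of the six functions appearing in $G$ is pseudoconvex on $\mathcal{X}$.

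The main obstacle is step (ii): unlike convexity, pseudoconvexity is fragile under pointwise maximization, and $G$ is in general nondifferentiable, so the statement must be read in the nonsmooth (Clarke) sense. I would argue directly: write $G=\max_{j}h_j$ with each $h_j$ smooth pseudoconvex, let $I(x)=\{j:h_j(x)=G(x)\}$, and use that the Clarke generalized gradient is $\partial G(x)=\operatorname{conv}\{\nabla h_j(x):j\in I(x)\}$. If $G(x')<G(x)$ then for every $j\in I(x)$ we have $h_j(x')\leq G(x')<G(x)=h_j(x)$, so pseudoconvexity of $h_j$ gives $\langle\nabla h_j(x),x'-x\rangle<0$; since any $\xi\in\partial G(x)$ is a convex combination of these active gradients, $\langle\xi,x'-x\rangle<0$ as well. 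Thus $G(x')<G(x)$ implies $\langle\xi,x'-x\rangle<0$ for every $\xi\in\partial G(x)$, i.e. $G$ is pseudoconvex in the sense required by the solver of \citep{Thang2022self}, and since $\mathcal{X}$ is convex, (\ref{bt_TFMVS}) is a pseudoconvex programming problem. The point needing the most care is the reliance on strict monotonicity of $m_i,n_i$ on the active interval $[y_i^1,y_i^0]$ in step (i) (plain monotonicity would only yield quasiconvexity of the components, since a vanishing $g'$ destroys the strict inequality), together with the correct nonsmooth formulation of pseudoconvexity used in step (ii).
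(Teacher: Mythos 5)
The paper states this proposition without supplying any proof, so there is nothing to compare your argument against line by line; judged on its own merits, your proof is correct and supplies exactly the two ingredients the paper implicitly relies on. Your step (i) — reduce to pseudoconvexity of the three raw objectives ($\mathcal{E}^*$ affine, $\mathcal{V}$ convex quadratic, $\mathcal{S}r^*$ pseudoconvex by Proposition~\ref{prop2}) and then compose with the fuzzy mappings via the rule that a strictly increasing differentiable scalar transformation preserves pseudoconvexity — is the standard route, and your observation that the construction (\ref{eq:z1}) confines $\mathcal{F}_i(x)$ to $[y_i^1,y_i^0]$ on $\mathcal{X}$, so that the piecewise-constant branches of $\mu_i,\nu_i$ never activate, is a point the paper glosses over entirely. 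Your step (ii), handling the pointwise maximum through the Clarke subdifferential $\partial G(x)=\operatorname{conv}\{\nabla h_j(x): j\in I(x)\}$ and taking convex combinations of the active gradients, is also correct and is in fact necessary: the max is nondifferentiable, so the smooth definition of pseudoconvexity in the paper cannot apply to (\ref{bt_TFMVS}) literally, and the nonsmooth formulation is the one under which the solver of \citep{Thang2022self} operates. The one caveat you rightly flag deserves emphasis: Proposition~\ref{prop:mu} as stated only asserts monotonicity of $\mu_i,\nu_i$, and your composition argument genuinely needs $m_i'<0$ and $n_i'>0$ on $(y_i^1,y_i^0)$ (a vanishing derivative kills the strict inequality and downgrades the composite to quasiconvex); this holds for the linear mappings (\ref{linearmf}) but is an additional hypothesis, not a consequence of what the paper proves. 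Likewise you correctly note that $\mathcal{V}(x)>0$ on $\mathcal{X}$ is needed for $\mathcal{S}r^*$ to be differentiable there. With those two hypotheses made explicit, your proof is complete.
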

For the pseudoconvex programming problem, Thang et al. \citep{Thang2022self} have demonstrated the convergence of the global solution and proposed a solution algorithm based on the gradient direction method. The efficiency of this algorithm has been shown by computational results. Consequently, we propose to use it to solve the problem (\ref{bt_TFMVS}).

\section{Computational Experiment}

In this section, to show the effectiveness of our proposed method in real-world scenarios, consider the following portfolio selection problem.
\begin{example} Consider 7 different stocks as a portfolio with expected return given in table \ref{table1} and covariance matrix given in table \ref{table2}.
\end{example}
\begin{table}[h]
\centering
\begin{tabular}{cccccccc}
\toprule 
stock & $\mathcal{S}_t\mathcal{C}_1$ & $\mathcal{S}_t\mathcal{C}_2$ & $\mathcal{S}_t\mathcal{C}_3$ & $\mathcal{S}_t\mathcal{C}_4$ & $\mathcal{S}_t\mathcal{C}_5$ & $\mathcal{S}_t\mathcal{C}_6$ & $\mathcal{S}_t\mathcal{C}_7$\tabularnewline
\midrule
\midrule 
Expected return & $0.0282$ & $0.0462$ & $0.0188$ & $0.0317$ & $0.01536$ & $0.0097$ & $0.01919$\tabularnewline
\bottomrule
\end{tabular}
\vspace{0.2cm}
\caption{\label{table1}Expected return of each stock}
\end{table}

\begin{table}[h]
\centering
\begin{tabular}{cccccccc}
\toprule 
stock &$\mathcal{S}_t\mathcal{C}_1$ & $\mathcal{S}_t\mathcal{C}_2$ & $\mathcal{S}_t\mathcal{C}_3$ & $\mathcal{S}_t\mathcal{C}_4$ & $\mathcal{S}_t\mathcal{C}_5$ & $\mathcal{S}_t\mathcal{C}_6$ & $\mathcal{S}_t\mathcal{C}_7$\tabularnewline
\midrule
\midrule 
$\mathcal{S}_t\mathcal{C}_1$ & $0.0119$ & $0.0079$ & $0.0017$ & $0.0019$ & $0.0022$ & $-0.0008$ & $0.0032$\tabularnewline
\midrule 
$\mathcal{S}_t\mathcal{C}_2$ & $0.0079$ & $0.0157$ & $0.0016$ & $0.0013$ & $0.0005$ & $-0.0026$ & $0.0035$\tabularnewline
\midrule 
$\mathcal{S}_t\mathcal{C}_3$ & $0.0017$ & $0.0016$ & $0.0056$ & $-0.0002$ & $0.0030$ & $0.0017$ & $-0.0003$\tabularnewline
\midrule 
$\mathcal{S}_t\mathcal{C}_4$ & $0.0019$ & $0.0013$ & $-0.0002$ & $0.0093$ & $-0.0007$ & $0.0010$ & $0.0024$\tabularnewline
\midrule 
$\mathcal{S}_t\mathcal{C}_5$ & $0.0022$ & $0.0005$ & $0.0030$ & $-0.0007$ & $0.0110$ & $0.0010$ & $0.0011$\tabularnewline
\midrule 
$\mathcal{S}_t\mathcal{C}_6$ & $-0.0008$ & $-0.0026$ & $0.0017$ & $0.0010$ & $0.0010$ & $0.0067$ & $0.0014$\tabularnewline
\midrule 
$\mathcal{S}_t\mathcal{C}_7$ & $0.0032$ & $0.0035$ & $-0.0003$ & $0.0024$ & $0.0011$ & $0.0014$ & $0.0130$\tabularnewline
\bottomrule
\end{tabular}
\vspace{0.2cm}
\caption{\label{table2}Covariance matrix of chosen stocks}
\end{table}
We solved two problems (\ref{bt_MV}) and (\ref{bt_MVS}) and their intuitionistic fuzzy versions. By denoting $\overline{x}$ as the solution of the original unfuzzy problem and $x^f$ as the solution of the intuitionistic fuzzy version, the computational results are presented as follows. Table \ref{table3} shows the value of optimal solutions $\overline{x}$ and $x^f$ to both problems (\ref{bt_MV}) and (\ref{bt_MVS}), while Table \ref{table4} demonstrates an important insight into the values of $\mathcal{E}(x), \mathcal{V}(x)$ and $\mathcal{S}r(x)$ yielded by unfuzzy and fuzzy solutions.
\begin{table}[h]
\centering
\begin{tabular}{ccc}
\toprule 
Problem & Sol & Value\tabularnewline
\midrule
\midrule 
\multirow{2}{*}{MV} &  $\overline{x}$ & $(0.0287, 0.1150,0.2274, 0.1857, 0.1111,  0.2653,
 0.0668)$\\
\cline{2-3}
& $x^f$& $(0.1078, 0.1268, 0.1740, 0.1526, 0.1257, 0.1981, 0.1150)$\tabularnewline
\midrule
\multirow{2}{*}{MVS}& $\overline{x}$ & $(0.0289, 0.1147, 0.2274, 0.1857, 0.1111, 0.2654,
 0.0668)$\\
\cline{2-3}
& $x^f$ & $(0.1026, 0.3680, 0.1016, 0.1265, 0.1006, 0.1000,
 0.1007)$\tabularnewline
\bottomrule
\end{tabular}
\vspace{0.2cm}
\caption{\label{table3}Fuzzy optimal solutions to (\ref{bt_MV}) 
and (\ref{bt_MVS})}
\end{table}
\begin{table}[h]
\centering
\begin{tabular}{ccccc}
\toprule 
Problem & Solution & $\mathcal{E}(x)$ & $\mathcal{V}(x)$ & $\mathcal{S}r(x)$\tabularnewline
\midrule
\midrule
\multirow{2}{*}{MV} & $\overline{x}$ & 0.02184&0.0022 &-\\
\cline{2-5}
& $x^f$&0.0230 &0.0024 &-\tabularnewline
\midrule
\multirow{2}{*}{MVS}& $\overline{x}$ &0.02183 &0.0022 &0.3562\\
\cline{2-5}
& $x^f$ &0.0302 &0.0041 & 0.3938\tabularnewline
\bottomrule
\end{tabular}
\vspace{0.2cm}
\caption{\label{table4}$\mathcal{E}(x), \mathcal{V}(x) \text{ and } \mathcal{S}r(x)$ values of the solution}
\end{table}

From Tables \ref{table3}, it can be seen that the solutions to unfuzzy Problem (\ref{bt_MV}) and Problem (\ref{bt_MVS}) are stable, leading to a very slight difference in the optimal values of $\mathcal{E}(x)$ and $\mathcal{V}(x)$ even when  $\mathcal{S}r(x)$ is taken into consideration as shown in Table \ref{table4}. This shows that restricting to crisp objectives severely undermines the impact of Sharpe ratio on expected return and risk. More importantly, from Table \ref{table4}, when enabling soft goals, we can observe a remarkable increase in the optimal values of expected return $\mathcal{E}(x)$ and risk $\mathcal{V}(x)$ in the intuitionistic fuzzy version. Specifically, the expected return is significantly larger, i.e. $0.0302$, compared to $0.02184$, which shows that our proposed model actually softens and effectively exploit the widened goals to achieve a better outcome than the original rigid versions. The optimal value of Sharpe ratio from our proposed Problem (\ref{bt_MVS}) is also reported, which helps investors make better investment decisions in terms of expected return per unit of risk. Moreover, it is also shown in Table \ref{table4} that investors may flexibly accept higher risk.

\section{Conclusion}
In this article, we consider a generalized multicriteria portfolio selection model and examine them in an intuitionistic fuzzy environment. We first show that this problem is a pseudoconvex programming problem. We then propose a method to convert the problem to the equivalent single-criteria problem. Our method outperforms previous approaches in both simplicity and effectiveness, where no assumption needs to be made on the frequent interaction with the investors and no hard constraint is set on the membership function. Finally, the results show that the intuitionistic fuzzy multicriteria portfolio selection problem that we propose has given us other better options regarding actual return expectations. In future work, we want to use algorithms to solve the (\ref{bt_MFMVS}) problem on an effective set of solutions \citep{tran2023framework, thang2015outcome, thang2016solving}, from which investors have more recommendations to choose from when deciding on asset allocation in the actual investment.

\renewcommand\bibname{References}
\bibliographystyle{splncs03}
\bibliography{references}
\end{document}